\documentclass[12pt]{article}

\usepackage[english]{babel}
\usepackage[letterpaper,top=1.5cm,bottom=1.5cm,left=2.0cm,right=1.5cm,marginparwidth=1.75cm]{geometry}

\usepackage{amsmath,amssymb, bm, amsthm, bbm,dsfont}
\usepackage{graphicx}
\usepackage{newtxtext,newtxmath} 

\usepackage{natbib}
\usepackage[colorlinks=true,
linkcolor=blue,
citecolor=blue,
urlcolor=blue]{hyperref}
\usepackage{setspace}
\usepackage{float}
\usepackage{amsfonts}

\usepackage{xcolor}
\usepackage{multirow}

\newtheorem{theo}{Theorem}[section]
\newtheorem{defi}[theo]{Definition}
\newtheorem{remark}[theo]{Remark}
\newtheorem{proposition}[theo]{Proposition}

\newtheorem{theorem}{Theorem}

\usepackage{soul}

\usepackage{lineno}

\title{\bf Conditional copula representations and extremal bounds for multivariate statistical functionals}
\usepackage{authblk} 

 \author[1] { Roberto Vila \thanks{rovig161@unb.br }}
 \author[1]{Cira E G Otiniano \thanks{cira@unb.br }}
 \author[1]{Carolyne Brito  \thanks{carolynesb01@gmail.com}}
 \author[1]{Enzo Brasil  \thanks{enzoportobrasil@gmail.com}}

  \affil[1]{\normalfont 
 	Department of Statistics, University of
 	Bras\'ilia, Bras\'ilia, 70.910-900, Brazil}

\begin{document}
\maketitle
\begin{abstract}
In this paper, we derive a conditional copula representation for expectations of the form $\mathbb{E}[g(\boldsymbol{X})]$, where $\boldsymbol{X}$ is a random vector with arbitrary marginal distributions and $g$ is a measurable function satisfying suitable integrability conditions. The proposed representation explicitly separates the contributions of the marginal distributions and the dependence structure through conditional copula distributions, yielding a unified quantile--copula framework for a broad class of statistical functionals. This framework encompasses numerous quantities of practical interest, including moments, probabilities, dependence measures, inequality indices, entropy measures, and multivariate functionals. We further establish extremal bounds under fixed marginals by exploiting the concordance order on copulas and characterize the classes of functions for which these bounds apply through the notion of $\Delta$-antitonicity. Finally, several illustrative examples illustrate the versatility of the proposed framework through applications to risk measures, stochastic superiority probabilities, information measures, and option pricing under dependence uncertainty.
\end{abstract}
\textbf{Keywords}: Conditional copula;
Quantile representation;
Concordance order;
Fréchet--Hoeffding bounds;
Statistical functionals;
$\Delta$-antitonic functions.

\noindent
\textbf{Mathematics Subject Classification (2020)}: 62H05,
60E05,
62H20,
60E15,
26B25.


\section{Introduction}

Many statistical quantities can be expressed as expectations of measurable functions of a random vector. Examples include moments, probabilities, dependence measures, inequality indices, entropy measures, and numerous risk and reliability functionals. Although these quantities depend jointly on the marginal distributions and the dependence structure, their mathematical representations often fail to separate these two components explicitly.

Copula theory provides a natural framework for describing multivariate dependence independently of the marginal distributions through Sklar's theorem \cite{Sklar-1959}. This decomposition has become a fundamental tool in probability, statistics, finance, actuarial science, reliability, and quantitative risk management. Nevertheless, relatively few general representations are available for expectations of arbitrary measurable functions in terms of conditional copula distributions.

The first contribution of this paper is to derive a conditional copula representation for
$\mathbb E[g(\boldsymbol X)]$
that separates the marginal and dependence components through conditional copula distributions. The resulting representation is valid for arbitrary 
marginals and measurable functions satisfying mild integrability conditions. It provides a unified framework encompassing numerous statistical quantities as particular cases, including moments, probabilities, dependence measures, inequality indices, entropy measures, and several multivariate functionals.

The second contribution is to establish extremal bounds for these expectations under fixed marginal distributions. By combining the proposed representation with the concordance order on copulas and recent results on $\Delta$-antitonic functions due to \cite{Lux2017}, we derive explicit upper bounds for arbitrary dimensions and explicit lower bounds in the bivariate case. Moreover, we identify broad classes of statistical functionals for which these bounds apply directly.

The remainder of the paper is organized as follows. Section \ref{sect:2} develops the conditional copula representation and illustrates its applicability through several families of statistical functionals. Section~\ref{sect:3} establishes extremal dependence bounds under fixed marginals and classifies the corresponding functions according to their $\Delta$-antitonicity properties. Section \ref{sect:4} presents an illustrative example demonstrating the practical implementation of the proposed framework. Finally, Section \ref{sect:5} concludes the paper with a summary of the main findings and directions for future research.

\section{A conditional copula representation for $\mathbb E[g(\pmb{X})]$} \label{sect:2}

Let $\boldsymbol{X}=(X_1,\ldots,X_n)$ be a random vector with marginal distribution functions
$F_1,\ldots,F_n$, corresponding quantile functions
$F_1^{-1},\ldots,F_n^{-1}$, and survival functions
$\overline F_i=1-F_i$, $i=1,\ldots,n$.
Assume that the joint distribution of $\boldsymbol{X}$ is characterized by the copula $C$.

The associated survival copula is defined by
\[
\overline{C}(u_1,\ldots,u_n)
=
1
+
\sum_{k=1}^{n}
(-1)^k
\sum_{\substack{J\subseteq\{1,\ldots,n\}\\|J|=k}}
C(w_1^{(J)},\ldots,w_n^{(J)}),
\]
where 
$
w_i^{(J)}
=
1-u_i \mathds{1}_{J}(i),
$
with $\mathds{1}_{A}$ denoting the indicator function of a set 
$A$.

In particular,
$
\overline C(u,v)
=
u+v-1+C(1-u,1-v),
$
and
\begin{align*}
	\overline C(u,v,w)
	&=
	1-u-v-w
	\\[0,2cm]
	&+C(1-u,1-v,1)
	+C(1-u,1,1-w)
	+C(1,1-v,1-w)
	-C(1-u,1-v,1-w).
\end{align*}

Let
$
U_i=F_i(X_i),\ i=1,\ldots,n,
$
so that
$
(U_1,\ldots,U_n)\sim C.
$
Further, let
$
\boldsymbol U=(U_1,\ldots,U_{n-1})
$
and
$\boldsymbol u=(u_1,\ldots,u_{n-1})$. 
The conditional copula distribution of $U_n$ given $\boldsymbol U$ is defined by
\[
C_{n\mid1,\ldots,n-1}
(u_n\mid \boldsymbol u)
=
\frac{\partial^{\,n-1}}
{\partial u_1\cdots\partial u_{n-1}} \, 
C(u_1,\ldots,u_n),
\]
which coincides with
$
\mathbb P
\!\left(
U_n\leqslant u_n
\,\middle|\,
\boldsymbol U=\boldsymbol u
\right).
$

Throughout this paper, for any $d$-dimensional copula $D$, we denote by
$
\mathbb E_D[Y]
=
\int_{[0,1]^d}
Y(\boldsymbol t)\,{\rm d}D(\boldsymbol t)
$
the expectation of a measurable function
$Y:[0,1]^d\to\mathbb R$
with respect to the probability measure induced by $D$, provided that
$\mathbb E_D|Y|<\infty$, where
$\boldsymbol t=(t_1,\ldots,t_d)\in[0,1]^d$.
Likewise, if $D(\cdot\mid\boldsymbol t)$ denotes a conditional copula distribution, we write
$
\mathbb E_{D(\cdot\mid\boldsymbol t)}[Y]
=
\int_0^1
Y(v)\,{\rm d}D(v\mid\boldsymbol t),
$
for every measurable function
$Y:[0,1]\to\mathbb R$
such that
$\mathbb E_{D(\cdot\mid\boldsymbol t)}|Y|<\infty$.

Now let
$g:\mathbb R^n\to\mathbb R$
be measurable such that
$\mathbb E|g(\boldsymbol X)|<\infty$.
Applying the law of total expectation, we obtain
\begin{align*}
	\mathbb E[g(\boldsymbol X)]
	&=
	\mathbb E_C
	\!\left[
	g\!\left(
	F_1^{-1}(U_1),\ldots,F_n^{-1}(U_n)
	\right)
	\right]
	\\[0.2cm]
	&=
	\mathbb E_{C_{1,\ldots,n-1}}
	\!\left[
	\mathbb E_{C_{n\mid1,\ldots,n-1}(\cdot\mid\boldsymbol U)}
	\!\left[
	g\!\left(
	F_1^{-1}(U_1),\ldots,F_n^{-1}(U_n)
	\right)
	\right]
	\right]
=
	\int_{[0,1]^{n-1}}
	h_g(\boldsymbol u)\,
	{\rm d}C_{1,\ldots,n-1}(\boldsymbol u),
\end{align*}
where
$C_{1,\ldots,n-1}$ denotes the marginal copula of $\boldsymbol U$, and
\[
\begin{aligned}
	h_g(\boldsymbol u)
	&=
	\mathbb E_{C_{n\mid1,\ldots,n-1}(\cdot\mid\boldsymbol u)}
	\!\left[
	g\!\left(
	F_1^{-1}(u_1),\ldots,
	F_{n-1}^{-1}(u_{n-1}),
	F_n^{-1}(U_n)
	\right)
	\right]
	\\[0.2cm]
	&=
	\int_0^1
	g\!\left(
	F_1^{-1}(u_1),\ldots,
	F_{n-1}^{-1}(u_{n-1}),
	F_n^{-1}(u_n)
	\right)
	{\rm d}C_{n\mid1,\ldots,n-1}(u_n\mid\boldsymbol u).
\end{aligned}
\]

The above representation provides a unified quantile--copula framework for a broad class of statistical functionals. The function $h_g(\boldsymbol u)$ represents the conditional contribution of the quantile levels $(u_1,\ldots,u_{n-1})$ to the functional $\mathbb E[g(\boldsymbol X)]$, whereas the dependence structure is entirely encoded by the conditional copula distribution $C_{n\mid1,\ldots,n-1}$. Consequently, many commonly used statistical quantities admit representations of this form, as illustrated in Tables~\ref{Table-1}--\ref{Table-5}.
\begin{table}[H]
\centering
\caption{Moment-based statistics.}
\renewcommand{\arraystretch}{1.25}
\begin{tabular}{llll}
\hline
Quantity & $g(x,y)$ & $h_g(u)$\\
\hline

$\mathbb E[XY]$
&
$xy$
&
$\displaystyle
F_1^{-1}(u)
\int_0^1
F_2^{-1}(v)
\,{\rm d}C_{2|1}(v|u)
$
\\[3mm]

$\operatorname{Cov}(X,Y)$
&
$\displaystyle xy-\mu_X \mu_Y$
&
$\displaystyle
F_1^{-1}(u)
\int_0^1
F_2^{-1}(v)
\,{\rm d}C_{2|1}(v|u)
-
\int_0^1F_1^{-1}(v)\,{\rm d}v
\int_0^1F_2^{-1}(v)\,{\rm d}v
$
\\[3mm]

$\mathbb E\left[\dfrac{X}{Y}\right]$
&
$\displaystyle \frac{x}{y}$
&
$\displaystyle
F_1^{-1}(u)
\int_0^1
\frac{1}{F_2^{-1}(v)}
\,{\rm d}C_{2|1}(v|u)
$
\\[3mm]

$\mathbb E\!\left[\dfrac{X}{X+Y}\right]$
&
$\displaystyle \frac{x}{x+y}$
&
$\displaystyle
\int_0^1
\frac{F_1^{-1}(u)}
     {F_1^{-1}(u)+F_2^{-1}(v)}
\,{\rm d}C_{2|1}(v|u)
$
\\[3mm]

$\mathbb E\!\left[\dfrac{Y}{X+Y}\right]$
&
$\displaystyle \frac{y}{x+y}$
&
$\displaystyle
\int_0^1
\frac{F_2^{-1}(v)}
     {F_1^{-1}(u)+F_2^{-1}(v)}
\,{\rm d}C_{2|1}(v|u)
$
\\[3mm]

$\mathbb E|X-Y|$
&
$|x-y|$
&
$\displaystyle
\int_0^1
|F_1^{-1}(u)-F_2^{-1}(v)|
\,{\rm d}C_{2|1}(v|u)
$
\\[3mm]

$\mathbb E|X-Y|^r$
&
$|x-y|^r$
&
$\displaystyle
\int_0^1
|F_1^{-1}(u)-F_2^{-1}(v)|^r
\,{\rm d}C_{2|1}(v|u)
$
\\[3mm]

\hline
\end{tabular}
\label{Table-1}
\end{table}

\begin{table}[H]
\centering
\caption{Probability-based statistics.}
\renewcommand{\arraystretch}{1.25}
\begin{tabular}{llll}
\hline
Quantity & $g(x,y)$ & $h_g(u)$\\
\hline

$\mathbb P(X\leqslant Y)$
&
$\mathds 1_{\{x\leqslant y\}}$
&
$
1-
C_{2|1}
\!\left(
F_2(F_1^{-1}(u))
\,\middle|\,
u
\right)
$
\\

$\mathbb P(X+Y\leqslant t)$
&
$\mathds 1_{\{x+y\leqslant t\}}$
&
$
C_{2|1}
\!\left(
F_2(t-F_1^{-1}(u))
\,\middle|\,
u
\right)
$
\\

$\mathbb P(X-Y\leqslant t)$
&
$\mathds 1_{\{x-y\leqslant t\}}$
&
$\displaystyle
1-
C_{2|1}
\!\left(
F_2(F_1^{-1}(u)-t)
\,\middle|\,
u
\right)
$
\\

$\mathbb P(X\leqslant s,Y\leqslant t)$
&
$\mathds 1_{\{x\leqslant s,y\leqslant t\}}$
&
$\displaystyle
\mathds 1_{\{u\leqslant F_1(s)\}}
C_{2|1}(F_2(t)|u)
$
\\

$\mathbb P(XY\leqslant t)$
&
$\mathds 1_{\{xy\leqslant t\}}$
&
$\mathds 1_{ \{u\leqslant F_1(0)\} }
+
\bigl(1-2\mathds 1_{\{u\leqslant F_1(0)\}}\bigr)
\,C_{2|1}
\!\left(
F_2\!\left(
\frac{t}{F_1^{-1}(u)}
\right)
\,\middle|\,u
\right)
$
\\
$\mathbb P\left(\dfrac{X}{Y}\leqslant t\right)$
&
$\mathds 1_{\{{x\over y}\leqslant t\}}$
&
$\mathds 1_{\{u\leqslant F_1(0)\}}
+
\bigl(1-2\mathds 1_{\{u\leqslant F_1(0)\}}\bigr)
\,C_{2|1}
\!\left(
F_2\!\left(
\frac{F_1^{-1}(u)}{t}
\right)
\,\middle|\,u
\right)
$
\\
$\mathbb P\left(\dfrac{X}{X+Y}\leqslant t\right)$
&
$\mathds 1_{\{\frac{x}{x+y}\leqslant t\}}$
&
$            \overline C_{2|1} \left(\overline{F}_2 \left(
			\frac{1-t}{t}\,
			\overline{F}_1^{-1}(u)
			\right) \,\middle| 1-u \right). 
		$
\\[2mm]
\hline
\end{tabular}
\label{Table-2}
\end{table}

\begin{table}[H]
\centering
\caption{Dependence and inequality measures.}
\renewcommand{\arraystretch}{1.2}
\begin{tabular}{llll}
\hline
Measure & $g(x,y)$ & $h_g(u)$\\
\hline

Spearman's $\rho_s$
&
$12F_1(x)F_2(y)-3$
&
$\displaystyle
12u
\int_0^1
v\,{\rm d}C_{2|1}(v|u)
-3
$
\\[2mm]

Kendall's $\tau$
&
$4C(F_1(x),F_2(y))-1$
&
$\displaystyle
4
\int_0^1
C(u,v)\,
{\rm d}C_{2|1}(v|u)
-1
$
\\[2mm]

$\operatorname{Cov}(X,F_2(Y))$
&
$\displaystyle xF_2(y)-\frac{\mu_X}{2}$
&
$\displaystyle
F_1^{-1}(u)
\int_0^1
v\,{\rm d}C_{2|1}(v|u)
-
{1\over 2} \int_0^1F_1^{-1}(v)\,{\rm d}v
$
\\[4mm]

Gini correlation 
$\Gamma_{X,Y}$
&
$\dfrac{\displaystyle xF_2(y)-\frac{\mu_X}{2}}{\operatorname{Cov}(X,F_1(X))}$
&
$
\dfrac{\displaystyle  
F_1^{-1}(u)
\int_0^1
v\,{\rm d}C_{2|1}(v|u) -{1\over 2}\int_0^1F_1^{-1}(v)\,{\rm d}v }{\displaystyle
\int_0^1
uF_1^{-1}(u)\,{\rm d}u
-{1\over 2}\int_0^1F_1^{-1}(v)\,{\rm d}v}$
\\[9mm]

Bivariate Gini index 
$G(X,Y)$
&
$\displaystyle {|x-y|\over \mu_X+\mu_Y}$
&
$\displaystyle
\dfrac{\displaystyle
\int_0^1
|F_1^{-1}(u)-F_2^{-1}(v)|
\,{\rm d}C_{2|1}(v|u)
}{\displaystyle
\int_0^1F_1^{-1}(v)\,{\rm d}v
+
\int_0^1F_2^{-1}(v)\,{\rm d}v
}
$
\\[8mm]

\hline
\end{tabular}
\label{Table-3}
\end{table}

\begin{remark}
The tail dependence coefficients $\lambda_U$ and $\lambda_L$ do not admit a representation of the form
$\mathbb E[g(X,Y)]$.
Nevertheless, they can be expressed directly through the conditional copula
distribution $C_{2|1}$ as
\[
\lambda_U
=
\lim_{q\to1^-}
\frac{
1-2q+\int_0^q C_{2|1}(q|u)\,{\rm d}u
}{1-q}
\quad 
\text{and}
\quad 
\lambda_L
=
\lim_{q\to0^+}
\frac{
\int_0^q C_{2|1}(q|u)\,{\rm d}u
}{q}.
\]
\end{remark}

\begin{table}[H]
\centering
\caption{Entropy and information measures.}
\renewcommand{\arraystretch}{1.2}
\begin{tabular}{llll}
\hline
Measure & $g(x,y)$ & $h_g(u)$ \\
\hline

Shannon entropy 
$H(X)$
&
$-\log({f}_1(x))$ 
&
$\displaystyle
-\log\left({f}_1(F_1^{-1}(u))\right)
$
\\[2mm]

Joint Shannon entropy 
$H(X,Y)$
&
$-\log\left(f_{XY}(x,y)\right)$
&
$\displaystyle
-\int_0^1
\log\left(
f_{XY}
(F_1^{-1}(u),F_2^{-1}(v))\right)
\,{\rm d}C_{2|1}(v|u)
$
\\[2mm]

Mutual information 
$I(X,Y)$
&
$\displaystyle
\log\left(
\frac{
f_{XY}(x,y)
}{
f_1(x)f_2(y)
}
\right)
$
&
$\displaystyle
\int_0^1
\log
\left(
\frac{
f_{XY}(F_1^{-1}(u),F_2^{-1}(v))
}{
f_1(F_1^{-1}(u))
f_2(F_2^{-1}(v))
}
\right)
\,{\rm d}C_{2|1}(v|u)
$
\\[4mm]

\hline
\end{tabular}
\label{Table-4}
\end{table}


\begin{table}[H]
	\centering
	\caption{Examples of statistical quantities represented by
		$\mathbb E[g(X,Y,Z)]$.}
	\renewcommand{\arraystretch}{1.2}
	\begin{tabular}{p{2.5cm}p{4.0cm}p{9.0cm}}
		\hline
		Quantity & $g(x,y,z)$ & $h_g(u,v)$\\[1mm]
		\hline
		
		Expected maximum
		&
		$\max\{x,y,z\}$
		&
		$\displaystyle
		\mathbb E_{C_{3|12}}
		\!\left[
		\max\{F_1^{-1}(u),F_2^{-1}(v),Q(u,v)\}
		\right]
		$
		\\
		
		Expected minimum
		&
		$\min\{x,y,z\}$
		&
		$\displaystyle
		\mathbb E_{C_{3|12}}
		\!\left[
		\min\{F_1^{-1}(u),F_2^{-1}(v),Q(u,v)\}
		\right]
		$
		\\
		
		Expected range
		&
		$\max\{x,y,z\}-\min\{x,y,z\}$
		&
		$\displaystyle
		\mathbb E_{C_{3|12}}
		\!\left[
		\max\{\cdot\}-\min\{\cdot\}
		\right]
		$
		\\
		
		Pairwise absolute difference
		&
		$|x-y|+|x-z|+|y-z|$
		&
		$\displaystyle
		\mathbb E_{C_{3|12}}
		\!\left[
		|F_1^{-1}(u)-F_2^{-1}(v)|
		+
		|F_1^{-1}(u)-Q|
		+
		|F_2^{-1}(v)-Q|
		\right]
		$
		\\
		
		Ordering probability
		&
		$\mathds{1}_{\{x<y<z\}}$
		&
		$\displaystyle
		\mathds{1}_{\{F_1^{-1}(u)<F_2^{-1}(v)\}}
		\,\mathbb{P}_{C_{3|12}}
		\!\left(
		Q>F_2^{-1}(v)
		\right)
		$
		\\
		
		Expected product
		&
		$xyz$
		&
		$\displaystyle
		F_1^{-1}(u)F_2^{-1}(v)
		\,
		\mathbb E_{C_{3|12}}[Q]
		$
		\\
		
		Expected $L_p$ norm
		&
		$\left(x^p+y^p+z^p\right)^{1/p}$
		&
		$\displaystyle
		\mathbb E_{C_{3|12}}
		\!\left[
		\Bigl\{
		(F_1^{-1}(u))^p
		+(F_2^{-1}(v))^p
		+Q^p
		\Bigr\}^{1/p}
		\right]
		$
		\\
		
		\hline
	\end{tabular}
	
	\vspace{0.2cm}
	
	{\footnotesize
		Here
		$Q(u,v)=F_3^{-1}(U_3)$,
		where
		$U_3\sim C_{3|12}(\cdot\mid u,v)$.
	}
	\label{Table-5}
\end{table}




\section{Extremal bounds and quantile representations} \label{sect:3}

This section establishes extremal bounds and quantile representations for multivariate expectations through copula-based arguments.

Let
$
\boldsymbol X=(X_1,\ldots,X_n)
$
be an $\mathbb R^n$-valued random vector with marginal distribution functions
$F_1,\ldots,F_n$ and copula $C$. By Sklar's theorem \citep{Sklar-1959},
$
F_{\boldsymbol X}(x_1,\ldots,x_n)
=
C\!\left(
F_1(x_1),\ldots,F_n(x_n)
\right),
\
(x_1,\ldots,x_n)\in\mathbb R^n.
$

Throughout this section, the marginal distributions are assumed to be fixed. Hence, for every Borel measurable function
$g:\mathbb R^n\to\mathbb R$ satisfying
$\mathbb E|g(\boldsymbol X)|<\infty$,
the functional
\begin{align}
	\pi_g(C)
	\equiv 
	\mathbb E_C[g(\boldsymbol X)]
	&=
	\int_{\mathbb R^n}
	g(x_1,\ldots,x_n)\,
	{\rm d}
	C\!\left(
	F_1(x_1),\ldots,F_n(x_n)
	\right)
	\nonumber
	\\[0.2cm]
	&=
	\int_{[0,1]^n}
	g\!\left(
	F_1^{-1}(u_1),\ldots,F_n^{-1}(u_n)
	\right)
	{\rm d}C(u_1,\ldots,u_n),
	\label{id-exp-cop}
\end{align}
depends only on the copula $C$.
\begin{defi}[\cite{Lux2017}]
	\label{antitonic-def}
	For $i=1,\ldots,n$ and $a_i<b_i$, define the finite difference operator
	\[
	\Delta^{\,i}_{a_i,b_i}
	g(x_1,\ldots,x_n)
	=
	g(x_1,\ldots,x_{i-1},b_i,x_{i+1},\ldots,x_n)
	-
	g(x_1,\ldots,x_{i-1},a_i,x_{i+1},\ldots,x_n).
	\]
	A function
	$g:\mathbb R^n\to\mathbb R$
	is called
	\emph{$\Delta$-antitonic}
	if, for every hyperrectangle
	$
	\prod_{i=1}^{n}[a_i,b_i]
	\subset
	\mathbb R^n,
	\
	a_i<b_i,
	$
	it holds that
	\[
	\left(
	\Delta^{\,1}_{a_1,b_1}
	\circ
	\cdots
	\circ
	\Delta^{\,n}_{a_n,b_n}
	\right)
	g(x_1,\ldots,x_n)
	\geqslant0,
	\quad
	\forall\,
	(x_1,\ldots,x_n)\in\mathbb R^n.
	\]
	The operator
	$\Delta$
	is referred to as the
	finite difference operator.
\end{defi}

\begin{remark}\label{rem-antitonic}
	Suppose that
	$g:\mathbb R^n\to\mathbb R$
	is of class
	$C^n$.
	Repeated application of the Fundamental Theorem of Calculus yields
	\[
	\left(
	\Delta^{\,1}_{a_1,b_1}
	\circ
	\cdots
	\circ
	\Delta^{\,n}_{a_n,b_n}
	\right)
	g(x_1,\ldots,x_n)
	=
	\int_{a_n}^{b_n}
	\cdots
	\int_{a_1}^{b_1}
	\frac{\partial^n g(x_1,\ldots,x_n)}
	{\partial x_1\cdots\partial x_n}
	\,{\rm d}x_1
	\cdots
	{\rm d}x_n.
	\]
	Consequently,
	$g$
	is
	$\Delta$-antitonic
	if and only if
	$
	{\partial^n g(x_1,\ldots,x_n)}/
	{\partial x_1\cdots\partial x_n}
	\geqslant0,
	\
	\forall\,
	(x_1,\ldots,x_n)\in\mathbb R^n.
	$
\end{remark}

To state the following results, let
\[
W_n(u_1,\ldots,u_n)
=
\max\!\left\{
\sum_{i=1}^{n}u_i-n+1,\,
0
\right\}
\quad
\text{and}
\quad
M_n(u_1,\ldots,u_n)
=
\min\{u_1,\ldots,u_n\},
\]
denote the lower and upper Fréchet--Hoeffding bounds, respectively.
It is well known that
$M_n$
is an $n$-copula for every
$n\geqslant2$,
whereas
$W_n$
is an $n$-copula only when
$n=2$
and a proper quasi-copula for
$n\geqslant3$
\citep{Genest1999}.

\begin{proposition}\label{antitonic}
	Let
	$g:\mathbb R^n\to\mathbb R$
	be
	$\Delta$-antitonic.
	Then the functional
	$\pi_g$
	is increasing with respect to the concordance order. Hence, for every
	$C\in\mathcal C_n$,
	\[
	\inf_{D\in\mathcal C_n}\pi_g(D)
	\leqslant
	\pi_g(C)
	\leqslant
	\pi_g(M_n),
	\]
	where $\mathcal C_n$ denotes the class of all $n$-copulas 
	
	Moreover, if
	$-g$
	is
	$\Delta$-antitonic,
	then
	\[
	\pi_g(M_n)
	\leqslant
	\pi_g(C)
	\leqslant
	\sup_{D\in\mathcal C_n}\pi_g(D).
	\]
	When $n=2$, the bounds are attained and
	$
	\pi_g(W_2)
	=
	\inf_{D\in\mathcal C_2}\pi_g(D)
	$
	in the first case, whereas
	$
	\pi_g(W_2)
	=
	\sup_{D\in\mathcal C_2}\pi_g(D)
	$
	in the second case.
\end{proposition}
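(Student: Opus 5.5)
The plan is to reduce everything to the monotonicity result of \cite{Lux2017} for $\Delta$-antitonic integrands under the concordance (lower/upper orthant) order, via the quantile representation \eqref{id-exp-cop}.

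\emph{Step 1: transfer the property to the unit cube.} Set $\tilde g(u_1,\ldots,u_n)=g(F_1^{-1}(u_1),\ldots,F_n^{-1}(u_n))$ on $(0,1)^n$. Each $F_i^{-1}$ is nondecreasing. Hence for $a_i<b_i$ in $(0,1)$ we have $F_i^{-1}(a_i)\leqslant F_i^{-1}(b_i)$. If the inequality is strict in every coordinate, the mixed difference of $\tilde g$ over $\prod[a_i,b_i]$ equals the mixed difference of $g$ over $\prod[F_i^{-1}(a_i),F_i^{-1}(b_i)]$, which is $\geqslant 0$. If some coordinate has equality, the corresponding operator $\Delta^i$ annihilates the function, so the difference is $0$. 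Therefore $\tilde g$ is $\Delta$-antitonic on $(0,1)^n$, and by \eqref{id-exp-cop} we get $\pi_g(C)=\mathbb E_C[\tilde g(\boldsymbol U)]$.

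\emph{Step 2: monotonicity in the concordance order.} Let $C\preceq D$ in concordance order, meaning $C\leqslant D$ and $\overline C\leqslant\overline D$ pointwise. I would invoke the theorem of \cite{Lux2017}, which states that $\int \tilde g\,{\rm d}C\leqslant\int\tilde g\,{\rm d}D$ for $\Delta$-antitonic $\tilde g$ whenever the integrals are finite. If a self-contained argument is preferred, first treat $\tilde g$ equal to indicators of upper orthants and their finite mixed-difference combinations, i.e. measures $\mu_{\tilde g}\geqslant0$ generated by $\tilde g$. Then use the integration-by-parts (Hoeffding-type) identity expressing $\int\tilde g\,{\rm d}C$ as a sum over faces $J\subseteq\{1,\ldots,n\}$ of integrals of lower-dimensional marginal survival functions of $C$ against the nonnegative measures induced by the mixed differences of $\tilde g$ in the coordinates in $J$. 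Lower-dimensional marginals of concordance-ordered copulas are again ordered. I expect this step, with its integrability and boundary bookkeeping (unbounded $\tilde g$ near the boundary of the cube, needing truncation and monotone/dominated convergence), to be the main obstacle, which is why I would rely on the cited result.

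\emph{Step 3: conclusions.} Every $n$-copula satisfies $C\leqslant M_n$ and $\overline C\leqslant M_n$ (the survival function of $M_n$ is $1-\max u_i$, the upper bound for survival functions). Hence $C\preceq M_n$ and $\pi_g(C)\leqslant\pi_g(M_n)$. The lower bound $\inf_D\pi_g(D)\leqslant \pi_g(C)$ is trivial. For $-g$ $\Delta$-antitonic, apply the above to $-g$, using $\pi_{-g}=-\pi_g$, to reverse the inequalities. For $n=2$, $W_2$ is a copula with $W_2\leqslant C$, and also $\overline{W_2}\leqslant\overline C$, since in two dimensions the survival function is $1-u-v+C(u,v)$. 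Thus $W_2\preceq C\preceq M_2$ for all $C\in\mathcal C_2$. Monotonicity then gives $\pi_g(W_2)\leqslant\pi_g(C)$, so the infimum is attained at $W_2$ (respectively the supremum in the second case), while the upper and lower bounds are attained at $M_2\in\mathcal C_2$.
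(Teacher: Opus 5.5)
You follow the same route as the paper, whose proof consists only of citing Theorem~5.5 and Proposition~6.1 of \cite{Lux2017}. Your Steps~1 and~3 (transfer to the cube, and $W_2\preceq C\preceq M_n$ with $W_2\in\mathcal C_2$) correctly fill in what that citation leaves implicit. For $n\geqslant 3$, however, there is a genuine gap, which you inherit from the paper.

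In Step~1 you verify only the top-order difference $\Delta^{1}_{a_1,b_1}\circ\cdots\circ\Delta^{n}_{a_n,b_n}$, because that is all Definition~\ref{antitonic-def} asks for. The monotonicity result you invoke in Step~2 is proved in \cite{Lux2017} for a stronger notion, which imposes a sign condition on every sub-face: $(-1)^{k}\,\Delta^{i_1}_{a_1,b_1}\circ\cdots\circ\Delta^{i_k}_{a_k,b_k}\,g\geqslant0$ for all $\{i_1,\ldots,i_k\}\subseteq\{1,\ldots,n\}$ with $k\geqslant2$. Your self-contained sketch has the same problem. It needs every face measure $\mu_J$, $2\leqslant|J|\leqslant n$, to be nonnegative; that is $\Delta$-monotonicity, which pairs with the upper orthant order, while the alternating-sign condition is its lower-orthant counterpart. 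Definition~\ref{antitonic-def} controls only $J=\{1,\ldots,n\}$. The three notions agree only for $n=2$, where each reduces to supermodularity.

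The gap cannot be closed under Definition~\ref{antitonic-def}, because the statement is then false for $n=3$. Take $g(x_1,x_2,x_3)=x_1x_2x_3$, whose top-order difference is $\prod_{i}(b_i-a_i)>0$. Let $X_1,X_2$ be i.i.d.\ uniform on $[-1,1]$, and let $G$ be the continuous, symmetric distribution function of $P=X_1X_2$. Set $X_3=2G(P)-1=\varphi(P)$; then $X_3$ is uniform on $[-1,1]$ and $\varphi$ is odd and strictly increasing. Hence $\mathbb E[X_1X_2X_3]=\mathbb E[P\varphi(P)]>0=\int_0^1(2u-1)^3\,{\rm d}u=\pi_g(M_3)$, violating the claimed upper bound.

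The fix is to take the definition of \cite{Lux2017} as the hypothesis (or $\Delta$-monotonicity, if you use your survival-function route). Step~1 then transfers verbatim, because each sub-face difference of $\tilde g$ is either $0$ or a sub-face difference of $g$. Steps~2--3 then go through, since $M_n$ dominates every copula in both orthant orders. For $n=2$ your argument is complete as written.
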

\begin{proof}
	The result follows immediately from
	Theorem~5.5 and Proposition~6.1 of
	\cite{Lux2017}.
\end{proof}

\begin{theorem}\label{main-result-multivariate}
	Let
	$g:\mathbb R^n\to\mathbb R$
	be
	$\Delta$-antitonic.
	Then
	\[
	\inf_{D\in\mathcal C_n}\pi_g(D)
	\leqslant
	\mathbb E[g(\boldsymbol X)]
	\leqslant
	\int_0^1
	g\!\left(
	F_1^{-1}(u),
	\ldots,
	F_n^{-1}(u)
	\right)
	{\rm d}u.
	\]
	
	Moreover, if
	$-g$
	is
	$\Delta$-antitonic,
	then
	\[
	\int_0^1
	g\!\left(
	F_1^{-1}(u),
	\ldots,
	F_n^{-1}(u)
	\right)
	{\rm d}u
	\leqslant
	\mathbb E[g(\boldsymbol X)]
	\leqslant
	\sup_{D\in\mathcal C_n}\pi_g(D).
	\]
	
	In particular, when $n=2$, the lower (respectively, upper) bound is attained at the Fréchet--Hoeffding lower copula $W_2$.
\end{theorem}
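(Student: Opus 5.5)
The plan is to obtain Theorem~\ref{main-result-multivariate} as a direct consequence of Proposition~\ref{antitonic}, once $\pi_g(M_n)$ is written as a one-dimensional integral. First, since the marginals are fixed and $\boldsymbol X$ has copula $C$, identity \eqref{id-exp-cop} gives $\mathbb E[g(\boldsymbol X)]=\pi_g(C)$. Proposition~\ref{antitonic} then yields, for $\Delta$-antitonic $g$,
\[
\inf_{D\in\mathcal C_n}\pi_g(D)\leqslant \pi_g(C)\leqslant \pi_g(M_n).
\]
When $-g$ is $\Delta$-antitonic, the same proposition gives the reversed chain $\pi_g(M_n)\leqslant\pi_g(C)\leqslant\sup_{D}\pi_g(D)$.

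The second step is to evaluate $\pi_g(M_n)$. The upper Fréchet--Hoeffding copula $M_n$ is the distribution of the comonotone vector $(U,\ldots,U)$ with $U\sim\mathrm{Unif}(0,1)$. To check this, note that $\mathbb P(U\leqslant u_1,\ldots,U\leqslant u_n)=\min\{u_1,\ldots,u_n\}$. Hence the measure induced by $M_n$ on $[0,1]^n$ is the push-forward of Lebesgue measure on $[0,1]$ under the diagonal map $u\mapsto(u,\ldots,u)$.

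Substituting this into \eqref{id-exp-cop} with $D=M_n$ gives
\[
\pi_g(M_n)=\int_{[0,1]^n} g\bigl(F_1^{-1}(u_1),\ldots,F_n^{-1}(u_n)\bigr)\,{\rm d}M_n(u_1,\ldots,u_n)=\int_0^1 g\bigl(F_1^{-1}(u),\ldots,F_n^{-1}(u)\bigr)\,{\rm d}u.
\]
This is exactly the bound appearing in both displays of the theorem. The change of variables is legitimate whenever $\pi_g(M_n)$ is well defined. I would note that the integrability of $g$ under $M_n$ is implicit in Proposition~\ref{antitonic}; alternatively, one can allow the value $+\infty$ (respectively $-\infty$), in which case the relevant inequality is trivial.

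For $n=2$, the attainment statement is the last part of Proposition~\ref{antitonic}. It says $\pi_g(W_2)=\inf_{D}\pi_g(D)$ in the first case and $\pi_g(W_2)=\sup_D\pi_g(D)$ in the second, so the lower (respectively upper) bound is attained at $W_2$. As a check, the same push-forward argument gives the explicit value $\pi_g(W_2)=\int_0^1 g(F_1^{-1}(u),F_2^{-1}(1-u))\,{\rm d}u$, because $W_2$ is the law of $(U,1-U)$.

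The only step needing real care is the identification of ${\rm d}M_n$ with the diagonal push-forward, together with the integrability of the composed integrand. The former follows from the comonotone representation above, since a measure on $[0,1]^n$ is determined by its distribution function. Everything else is inherited from Proposition~\ref{antitonic}.
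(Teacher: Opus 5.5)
Your argument is the paper's proof step for step. You write $\mathbb E[g(\boldsymbol X)]=\pi_g(C)$, invoke Proposition~\ref{antitonic}, and then show $\pi_g(M_n)=\int_0^1 g(F_1^{-1}(u),\ldots,F_n^{-1}(u))\,{\rm d}u$. Your push-forward of Lebesgue measure under $u\mapsto(u,\ldots,u)$ justifies that last identity more cleanly than the paper's formal $\delta$-calculation.

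The gap, which the paper shares, is the step you explicitly treat as a black box. Definition~\ref{antitonic-def} as written constrains only the top-order difference $\Delta^1_{a_1,b_1}\circ\cdots\circ\Delta^n_{a_n,b_n}g$. With that definition, Proposition~\ref{antitonic}, and hence the theorem, is false for $n\geqslant3$.
\begin{itemize}
\item Take $n=3$ and $g(x_1,x_2,x_3)=-x_1x_2$. Already $\Delta^3_{a_3,b_3}g\equiv0$, so both $g$ and $-g$ are $\Delta$-antitonic in this sense. The two displays together would then force $\mathbb E[X_1X_2]=\int_0^1F_1^{-1}(u)F_2^{-1}(u)\,{\rm d}u$ for every copula. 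With i.i.d.\ standard normal components the left side is $0$ and the right side is $1$.
\item The same failure occurs for a function the paper itself classifies as $\Delta$-antitonic, namely $g=x_1x_2x_3$. Let $X_1,X_2,Z$ be i.i.d.\ standard normal and $X_3=\operatorname{sign}(X_1X_2)\,|Z|$, which is again standard normal. Then $\mathbb E[X_1X_2X_3]=(2/\pi)^{3/2}$, which exceeds the comonotone value $\int_0^1(\Phi^{-1}(u))^3\,{\rm d}u=0$, where $\Phi$ is the standard normal distribution function.
\end{itemize}

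The missing ingredient is the definition actually used by Lux and Papapantoleon (and by M\"uller and Stoyan). It requires $(-1)^{|I|}\Delta^{I}g\geqslant0$ for every $I\subseteq\{1,\ldots,n\}$ with $|I|\geqslant2$, where $\Delta^{I}$ denotes the mixed difference in the coordinates of $I$. Under that definition, the pointwise inequality $C\leqslant M_n$ (lower orthant order with common marginals) gives $\pi_g(C)\leqslant\pi_g(M_n)$, and the rest of your argument goes through unchanged. Alternatively, requiring $\Delta^{I}g\geqslant0$ for all such $I$, combined with the upper orthant comparison of survival functions, gives the same bound.

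For $n=2$ the only admissible $I$ is $\{1,2\}$, and both definitions reduce to supermodularity. Your conclusions there are correct, including $\pi_g(W_2)=\int_0^1g(F_1^{-1}(u),F_2^{-1}(1-u))\,{\rm d}u$ obtained from the law of $(U,1-U)$.
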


\begin{proof}
	Since
	$
	\mathbb E[g(\boldsymbol X)]
	=
	\pi_g(C),
	$
	Proposition~\ref{antitonic} yields
	$$
	\mathbb E[g(\boldsymbol X)]
	\leqslant
	\pi_g(M_n).
	$$
	By \eqref{id-exp-cop},
	\[
	\pi_g(M_n)
	=
	\int_{[0,1]^n}
	g\!\left(
	F_1^{-1}(u_1),
	\ldots,
	F_n^{-1}(u_n)
	\right)
	{\rm d}M_n(u_1,\ldots,u_n).
	\]
	Since
	$
	M_n(u_1,\ldots,u_n)
	=
	\min\{u_1,\ldots,u_n\},
	$
	the probability measure induced by
	$M_n$
	is concentrated on the diagonal
	$
	\{(u,\ldots,u):0<u<1\}.
	$
Equivalently, \[ \frac{\partial^n M_n(u_1,\ldots,u_n)} {\partial u_1\cdots\partial u_n} = \delta_u(u_2)\cdots\delta_u(u_n) \] in the sense of distributions. Therefore, 
\[ 
\pi_g(M_n) = \int_0^1 g\!\left( F_1^{-1}(u), \ldots, F_n^{-1}(u) \right) {\rm d}u, 
\] 
which proves the first inequality.
	
	The second inequality follows by applying the first part to the function $-g$.
\end{proof}

\begin{remark}\label{rem-W2}
	For $n=2$, the lower Fréchet--Hoeffding bound
	$
	W_2(u,v)=\max\{u+v-1,0\}
	$
	is the countermonotonic copula. Hence,
	\[
	\pi_g(W_2)
	=
	\int_0^1
	g\!\left(
	F_1^{-1}(u),
	F_2^{-1}(1-u)
	\right)
	{\rm d}u,
	\]
	and Theorem~\ref{main-result-multivariate} becomes
	\[
	\int_0^1
	g\!\left(
	F_1^{-1}(u),
	F_2^{-1}(1-u)
	\right)
	{\rm d}u
	\leqslant
	\mathbb E[g(X,Y)]
	\leqslant
	\int_0^1
	g\!\left(
	F_1^{-1}(u),
	F_2^{-1}(u)
	\right)
	{\rm d}u.
	\]
\end{remark}

Theorem~\ref{main-result-multivariate} provides extremal bounds for
$\mathbb E[g(\boldsymbol X)]$
over the class of admissible dependence structures with fixed marginals.
The upper bound depends only on the marginal quantile functions and is attained under comonotonicity.
For $n=2$, the lower bound is also available in closed form and is attained under countermonotonicity. Together with the quantile--copula representation developed in the previous section, these results establish a unified framework for characterizing the set of attainable values of a broad class of multivariate statistical functionals.


Tables~\ref{tab:Delta-antitonic}--\ref{tab:Delta-antitonic-Table5}
classify the functions appearing in Tables~\ref{Table-1}--\ref{Table-5}
according to whether $g$ or $-g$ is $\Delta$-antitonic in the sense of
Definition~\ref{antitonic-def}. Consequently, they identify the statistical
quantities for which the upper or lower extremal bounds of
Theorem~\ref{main-result-multivariate} apply directly, thereby providing a
practical guide to the use of the proposed framework.
\begin{table}[H]
	\centering
	\caption{Classification of the functions in Table~\ref{Table-1} according to $\Delta$-antitonicity.}
	\renewcommand{\arraystretch}{1.2}
	\begin{tabular}{lcc}
		\hline
		Quantity & $g$ $\Delta$-antitonic & $-g$ $\Delta$-antitonic\\
		\hline
		$\mathbb E[XY]$ & Yes & No\\
		$\operatorname{Cov}(X,Y)$ & Yes & No\\
		$\mathbb E\left[\dfrac{X}{Y}\right]$ & No & Yes\\
		$\mathbb E\!\left[\dfrac{X}{X+Y}\right]$ & No & No\\
		$\mathbb E\!\left[\dfrac{Y}{X+Y}\right]$ & No & No\\
		$\mathbb E|X-Y|$ & No & Yes\\
		$\mathbb E|X-Y|^r,\;0<r<1$ & Yes & No\\
		$\mathbb E|X-Y|^r,\;r\geqslant1$ & No & Yes\\
		\hline
	\end{tabular}
	\label{tab:Delta-antitonic}
\end{table}

\begin{table}[H]
	\centering
	\caption{Classification of the functions in Table~\ref{Table-2} according to $\Delta$-antitonicity.}
	\renewcommand{\arraystretch}{1.2}
	\begin{tabular}{lcc}
		\hline
		Quantity &
		$g$ $\Delta$-antitonic &
		$-g$ $\Delta$-antitonic
		\\
		\hline
		
		$\mathbb P(X\leqslant Y)$
		&
		No
		&
		Yes
		\\
		
		$\mathbb P(X+Y\leqslant t)$
		&
		No
		&
		Yes
		\\
		
		$\mathbb P(X-Y\leqslant t)$
		&
		Yes
		&
		No
		\\
		
		$\mathbb P(X\leqslant s,Y\leqslant t)$
		&
		Yes
		&
		No
		\\
		
		$\mathbb P(XY\leqslant t)$
		&
		No
		&
		Yes
		\\
		
		$\mathbb P\!\left(\dfrac{X}{Y}\leqslant t\right)$
		&
		No
		&
		Yes
		\\
		
		$\mathbb P\!\left(\dfrac{X}{X+Y}\leqslant t\right)$
		&
		No
		&
		Yes
		\\
		
		\hline
	\end{tabular}
	\label{tab:Delta-antitonic-Table2}
\end{table}

\begin{table}[H]
	\centering
	\caption{Classification of the functions in Table~\ref{Table-3} according to $\Delta$-antitonicity.}
	\renewcommand{\arraystretch}{1.2}
	\begin{tabular}{lcc}
		\hline
		Measure &
		$g$ $\Delta$-antitonic &
		$-g$ $\Delta$-antitonic
		\\
		\hline
		
		Spearman's $\rho_s$
		&
		Yes
		&
		No
		\\
		
		Kendall's $\tau$
		&
		Yes
		&
		No
		\\
		
		$\operatorname{Cov}(X,F_2(Y))$
		&
		Yes
		&
		No
		\\
		
		Gini correlation $\Gamma_{X,Y}$
		&
		Yes
		&
		No
		\\
		
		Bivariate Gini index $G(X,Y)$
		&
		No
		&
		Yes
		\\
		
		\hline
	\end{tabular}
	\label{tab:Delta-antitonic-Table3}
\end{table}

\begin{table}[H]
	\centering
	\caption{Applicability of the $\Delta$-antitonicity criterion to the functions in Table~\ref{Table-4}.}
	\renewcommand{\arraystretch}{1.2}
	\begin{tabular}{lcc}
		\hline
		Measure &
		$g$ $\Delta$-antitonic &
		Remarks
		\\
		\hline
		Shannon entropy $H(X)$
		&
		Yes
		&
		Also $-g$ is $\Delta$-antitonic.
		\\
		Joint Shannon entropy $H(X,Y)$
		&
		Not in general
		&
		$g$ depends on $f_{XY}$.
		\\
		Mutual information $I(X,Y)$
		&
		Not in general
		&
		$g=\log(c)$ depends on the copula density.
		\\
		\hline
	\end{tabular}
\end{table}

\begin{table}[H]
	\centering
	\caption{Classification of the functions in Table~\ref{Table-5} according to $\Delta$-antitonicity.}
	\renewcommand{\arraystretch}{1.2}
	\begin{tabular}{lcc}
		\hline
		Quantity &
		$g$ $\Delta$-antitonic &
		$-g$ $\Delta$-antitonic
		\\
		\hline
		
		Expected maximum
		&
		Yes
		&
		No
		\\
		
		Expected minimum
		&
		No
		&
		Yes
		\\
		
		Expected range
		&
		No
		&
		No
		\\
		
		Pairwise absolute difference
		&
		Yes
		&
		Yes
		\\
		
		Ordering probability
		&
		Yes
		&
		No
		\\
		
		Expected product
		&
		Yes
		&
		No
		\\
		
		Expected $L_1$ norm
		&
		Yes
		&
		Yes
		\\
		
		Expected $L_p$ norm ($p>1$)
		&
		No general classification
		&
		No general classification
		\\
		
		\hline
	\end{tabular}
	\label{tab:Delta-antitonic-Table5}
\end{table}

\section{Illustrative examples} \label{sect:4}
Many fundamental quantities in probability and statistics can be expressed as expectations of measurable functions of a random vector, namely $\mathbb{E}[g(\boldsymbol X)]$, $\boldsymbol X=(X_1, \dots X_n)$. This broad class includes mixed and higher-order moments, probabilities of multivariate events, distributions of transformations of random vectors, conditional expectations, and information measures. 
For example, the quantile associated with the distribution of the aggregate variable
$
L=\sum_{i=1}^{n}X_i
$
 defines the Value-at-Risk (VaR), whereas the Expected Shortfall (ES) measures the expected magnitude of L beyond this quantile. 
 Assuming that \(L\) has a continuous distribution and 
$
g(x_1,\ldots,x_n)=
\left(\sum_{i=1}^{n}x_i\right)
\mathds{1}_{\left\{\sum_{i=1}^{n}x_i>
\operatorname{VaR}_{\alpha}(L)\right\}},
$
 the Expected Shortfall at confidence level \(\alpha\)  defined by
\[
ES_{\alpha}(L)=
\frac{1}{1-\alpha} \,
\mathbb{E}\!\left[
L \mathds 1_{\left\{L>\mathrm{VaR}_{\alpha}(L)\right\}}
\right]
\]
corresponds to the calculation of
$
ES_{\alpha}(L)
=
\mathbb{E}[g(\mathbf{X})]/(1-\alpha).
$
 
 These risk measures provide a natural framework for studying aggregate effects arising from multiple sources of uncertainty. In hydrology, analogous concepts are used to define multivariate return periods, which quantify the likelihood of occurrence of compound extreme events involving several dependent variables. Therefore, quantities such as VaR, ES, and multivariate return periods are fundamental tools for characterizing aggregate risks and extreme events in actuarial science, finance, and environmental applications \citep{McNeil2015,Salvadori2016}.
 
 More generally, expectations of the form 
$\mathbb{E}[g(\boldsymbol X)]$  encompass a wide variety of statistical functionals arising in probability, statistical inference, finance, actuarial science, reliability engineering, environmental sciences, and information theory.

The versatility of the proposed framework is illustrated even in the bivariate case. Indeed, suitable choices of the function $g$ recover several classical statistical functionals. For instance, $g(x,y)=\mathds 1_{\{x<y\}}$,
$
\mathbb{P}(X<Y)=\mathbb{E}\left[\mathds{1}_{\{X<Y\}}\right]
$
represents the probability of stochastic superiority and corresponds to the population quantity underlying the Mann–Whitney statistic  \citep{MannWhitney1947,Hollander2014}. This probability has also found practical applications; for example, \cite{OtinianoMaluf2019} employed it to compare river streamflows between the dry and rainy seasons. Likewise, the mixed moment $\mathbb{E}[XY]$ determines the covariance structure \citep{CasellaBerger2002}, while Shannon entropy and mutual information are themselves expectations of suitable functions \citep{CoverThomas2006}. These examples illustrate how a single representation encompasses statistical functionals that have traditionally been studied in different contexts, providing a unified analytical framework for theoretical developments and practical applications.


Theorem~\ref{main-result-multivariate} provides lower and upper bounds for risk measures such as 
$ES_{\alpha}(L)$ within a class of dependence structures consistent with fixed 
marginal distributions. Consequently, when the marginal distributions are known 
but the dependence structure among the components of the portfolio is unspecified, 
the proposed bounds represent best- and worst-case scenarios for the 
Expected Shortfall. Since $ES_{\alpha}(L)$ corresponds to the conditional mean 
of the aggregate variable $L$ given that it exceeds the quantile 
$\operatorname{VaR}_{\alpha}(L)$, these bounds allow us to quantify the impact of 
dependence uncertainty on the average severity of extreme portfolio losses 
\citep{McNeil2015, Embrechts2013}.

Another important application of Theorem~\ref{main-result-multivariate} is the classical problem of option pricing under dependence uncertainty. In this framework, the marginal distributions of the underlying assets are assumed to be known, whereas the copula describing their joint dependence is only partially identified, \cite{Lux2017}. Since option prices are expectations of payoff functions of the underlying assets, the proposed bounds yield best- and worst-case option prices over the class of admissible dependence structures with fixed marginals.

\section{Concluding remarks} \label{sect:5}

This paper introduced a conditional copula representation for expectations of the form
$
E[g(\boldsymbol{X})],
$
where $\boldsymbol{X}$ is a random vector with arbitrary marginal distributions and $g$ is a measurable function satisfying suitable integrability conditions. The proposed representation separates the contributions of the marginal distributions and the dependence structure through conditional copula distributions, providing a unified quantile--copula framework for expressing a broad class of statistical functionals.
We also established extremal bounds for these expectations under fixed marginal distributions by exploiting the concordance order on copulas. Furthermore, the characterization of statistical functionals through the notion of $\Delta$-antitonicity provides practical conditions under which the proposed bounds are applicable.
The illustrative examples demonstrate the applicability of the proposed framework to a variety of statistical functionals, including risk measures, stochastic superiority probabilities, information measures, and option pricing under dependence uncertainty.
Future research may consider extensions of the proposed framework to broader classes of dependence structures, including quasi-copulas and partially specified dependence models. Other directions include the derivation of extremal bounds for additional classes of statistical functionals and the development of computational methods for implementing the proposed representations in high-dimensional settings.


\section*{Declarations}

\paragraph*{Ethics Approval}
Not applicable.

\paragraph*{Funding Declaration}
This study was financed in part by CAPES (Finance Code 001). 

\paragraph*{Disclosure statement}
There are no conflicts of interest to disclose.

\paragraph{Data availability}
No data was used for the research described in the article.

\paragraph*{Author Contributions Statement}
All authors contributed equally to the conception and design of the study, data analysis, interpretation of the results, manuscript preparation, and revision. All authors read and approved the final version of the manuscript.


\end{document}